\def\A{{\mathcal{A}}}
\def\B{{\mathcal{B}}}
\def\N{{\mathbb{N}}}
\def\R{{\mathbb{R}}}
\theoremstyle{plain}
\newtheorem{thm}{Theorem}[section]
\newtheorem{lem}{Lemma}[section]
\newtheorem{prop}{Proposition}[section]
\theoremstyle{definition}
\newtheorem{rem}{Remark}[section]
\newtheorem{assmp}{Assumption}[section]
\newtheorem*{assmp*}{Assumption 3.4$'$}
\numberwithin{equation}{section}
\title{Recursive Variational Problems in Nonreflexive Banach Spaces with an Infinite Horizon: An Existence Result\thanks{I am grateful to Dean Carlson and H\'{e}l\`{e}ne Frankowska for helpful comments to an earlier version of the manuscript. This research is supported by JSPS KAKENHI Grant No.\ 26380246 from the Ministry of Education, Culture, Sports, Science and Technology, Japan.}}
\date{\today}
\author{Nobusumi Sagara \\[-3pt]
\\[-3pt]
{\small Faculty of Economics, Hosei University} \\[-4pt]
{\small 4342, Aihara, Machida, Tokyo, 194-0298, Japan} \\[-4pt]
{\footnotesize e-mail: nsagara@hosei.ac.jp}}
\begin{document}
\maketitle
\setcounter{page}{0}
\thispagestyle{empty}
\begin{abstract} 
We investigate variational problems with recursive integral functionals governed by infinite-dimensional differential inclusions with an infinite horizon and present an existence result in the setting of nonreflexive Banach spaces. We find an optimal solution in a Sobolev space taking values in a Banach space under the Cesari type condition. We also investigate sufficient conditions for the existence of solutions to the initial value problem for the differential inclusion. \\

\noindent
\textbf{Key Words:} Recursive integral functional; Sobolev space; Bochner integral; Differential inclusion; Viability.   \\[-6pt]

\noindent \textbf{MSC2010:} Primary: 49J53, 49J45; Secondary: 28B05, 91B62
\end{abstract}
\clearpage

\section{Introduction}
Recursive integral functionals with an infinite horizon introduced in \cite{uz68} and then elaborated by \cite{ep87a,ep87b,eh83} endogenize intertemporally variable discount factors, which is a continuous time analogue of \cite{ko60} in the discrete time formulation of the specific form
$$
\sum_{t=0}^\infty L(x_t,x_{t+1})\exp\left( \sum_{s=0}^tf(x_s,x_{s+1}) \right)
$$ 
where $L$ is a cost/utility function and $f$ is a discount function. Contrary to continuous/discrete time models with constant discount rates, those with recursive objective functionals exhibit intriguing dynamics in economic growth models ranging from the saddle point stability with a unique stationary state (see \cite{chl91,ch94,cns08,ep87a,ep87b,eh83,na84,ob90,se93,uz68}), the existence of balanced growth paths (see \cite{pwz97}), and to possible complex dynamics with multiple stationary states (see \cite{da03,dr96,dw07,els11,iw72,rh73}), depending upon the assumption on the discount function. Needless to say, dynamic economic analysis with continuous time in the above literature hinges upon the Pontraygin's maximum principle and the Hamilton--Jacobi--Bellman equations; see \cite{bb92,bhm90,chl91,so91} for necessary conditions for optimality in a more general framework for the case with recursive integral functionals. 

It is thus quite natural to pursue the existence of optimal solutions to variational and optimal control problems with recursive integral functionals as generally as possible to guarantee the dynamical analysis to be meaningful enough. There are several attempts addressed to this issue. The first existence result with recursive integral functionals appeared in \cite{bbs89} as a convex variational problem and then whose result was translated into optimal control problems with some improvement in \cite{ba93,ca90} under the Cesari type of continuity and convexity assumptions. Existence results without convexity assumptions were established in \cite{sa01,sa07}, which detours the relaxation technique by virtue of employing the norm topology of weighted Sobolev spaces on which certain boundedness assumptions are imposed to ensure the norm compactness of the set of admissible arcs. All of these works were, however, devoted to the finite-dimensional control systems. 

In this paper, we investigate variational problems with recursive integral functionals governed by infinite-dimensional differential inclusions with an infinite horizon and present an existence result in the setting of nonreflexive Banach spaces under the Cesari type condition. An intricate difficulty arises in the framework under consideration for the compactness of the set of admissible arcs, which stems exclusively from nonreflexivity and unbounded interval for infinite-dimensional control systems. In the infinite horizon setting, existence results for optimal control problems governed by linear evolution equations taking values in a Hilbert space $H$ were explored in \cite{bp12,chj87,za00}. The controls in these works are $L^2$-functions with values in $H$, which significantly simplifies the compactness argument because bounded sets in $L^2$ are relatively weakly compact; thereby the standard diagonalization procedure for a minimizing sequence works smoothly to obtain an optimal arc as its limit. We find an optimal arc in a Sobolev space taking values in a Banach space $E$ such that the derivatives are $L^1$-functions with values in $E$. To dispense with reflexivity for the compactness argument, we employ Diestel's theorem (see \cite{drs93}) on the weak compactness in the space of Bochner integrable functions. Then the uniform convergence of a minimizing sequence follows from the Arzela--Ascoli theorem.  

The organization of the paper is as follows. After providing mathematical preliminaries in Section 2, we present an existence result in Section 3. We also investigate in Section 4 sufficient conditions for the existence of solutions to the initial value problem governed by the differential inclusion.

\section{Preliminaries}
\subsection{Sobolev Spaces}
Let $(E,\| \cdot \|)$ be a Banach space and $E^*$ be the dual space of $E$ with the duality denoted by $\langle x^*,y \rangle$ for $x^*\in E^*$ and $y\in E$. Denote by $\Omega=[0,\infty)$ an unbounded interval of the real line with the Lebesgue measure. A function $x:\Omega\to E$ is said to be \textit{locally absolutely continuous} if its restriction to the bounded closed interval $[0,T]$ is absolutely continuous for every $T>0$, i.e., for every $T>0$ and $\varepsilon>0$ there exists $\delta>0$ such that $0\le t_0<t_1<\dots <t_n\le T$ and $\sum_{i=1}^n| t_{i-1}-t_i|<\delta$ imply $\sum_{i=1}^n\| x(t_{i-1})-x(t_i) \|<\varepsilon$. A function $x$ is said to be \textit{differentiable} at $t_0>0$ if there exists $\xi\in E$ such that 
$$
\lim_{t\to t_0} \left\| \frac{x(t)-x(t_0)}{t-t_0}-\xi \right\|=0. 
$$
The vector $\xi$ is denoted by $x'(t_0)$ and called the \textit{derivative} of $x$ at $t_0$. It should be noted that unlike the real-valued case, locally absolutely continuous functions with values in Banach spaces fail to be differentiable almost everywhere; see \cite[Examples 1 and 2]{pu88} or \cite[Example 4.2]{de92} for such examples. The nondifferentiability of locally absolutely continuous functions disappears under the reflexivity assumption. Specifically, every locally absolutely function $x:\Omega\to E$ has the derivative $x'(t)$ a.e.\ $t\in \Omega\setminus \{ 0 \}$ with $x(t)=\int_0^tx'(s)ds+x(0)$ for every $t\in \Omega$ whenever $E$ is reflexive; see \cite[Lemma]{ko67}. We do not assume, however, the reflexivity of $E$ throughout the paper. 

A function $x:\Omega\to E$ is said to be \textit{strongly measurable} if there exists a sequence of simple functions $\{ x_n \}$ from $\Omega$ to $E$ such that $\| x_n(t)-x(t) \|\to 0$ a.e.\ $t\in \Omega$. A strongly measurable function $x$ is \textit{locally Bochner integrable} if it is Bochner integrable on every compact subset of $A\subset \Omega$, that is, $\int_A\| x(t) \|dt<\infty$, where the \textit{Bochner integral} of $x$ over $A$ is defined by $\int_Ax(t)dt:=\lim_n\int_Ax_n(t)dt$. Let $L^1_{\mathrm{loc}}(\Omega,E)$ be the space of (the equivalence classes of) locally Bochner integrable functions from $\Omega$ to $E$. Denote by $W^{1,1}_{\mathrm{loc}}(\Omega,E)$ the Sobolev space, which consists of all locally absolutely continuous functions $x:\Omega\to E$ whose derivative $x'(t)$ exists a.e.\ $t\in \Omega\setminus \{ 0 \}$ such that there exists $u\in L^1_{\mathrm{loc}}(\Omega,E)$ satisfying $x(t)=\int_0^tu(s)ds+x(0)$ for every $t\in \Omega$ with $u(t)=x'(t)$ a.e.\ $t\in \Omega\setminus \{ 0 \}$. For each $n\in \N$, define the seminorm $\mu_n$ on $W^{1,1}_{\mathrm{loc}}(\Omega,E)$ by $\mu_n(x)=\int_0^n(\| x(t) \|+\| x'(t) \|)dt$. Since $\{ \mu_n \}_{n\in \N}$ is a countable separating family of seminorms, $W^{1,1}_{\mathrm{loc}}(\Omega,E)$ is a Fr\'echet space under the compatible metric $d$ given by
$$
d(x_1,x_2)=\max_{n\in \N}\frac{\mu_n(x_1-x_2)}{2^n(1+\mu_n(x_1-x_2))}, \quad x_1,x_2\in W^{1,1}_{\mathrm{loc}}(\Omega,E).  
$$
When $\Omega$ is replaced by $[0,T]$, the above definition simply leads to that of the Sobolev space $W^{1,1}([0,T],E)$ normed by $\| x \|_{1,1}=\int_0^T(\| x(t) \|+\| x'(t) \|)dt$.

\subsection{Weak Compactness in $L^1$}
Let $L^1([0,T],E)$ be the space of (the equivalence classes of) Bochner integrable functions from $[0,T]$ to $E$ normed by $\| x\|_1=\int_0^T\| x(t) \|dt$. A function $p:[0,T]\to E^*$ is said to be \textit{weakly$^*\!$ scalarly measurable} if $t\mapsto \langle p(t),y \rangle$ is measurable for every $y\in E$. Weakly$^*\!$ scalarly measurable functions $p$ and $q$ are said to be \textit{weakly$^*\!$ scalarly equivalent} if $\langle p(t),y \rangle=\langle q(t),y \rangle$ for every $y\in E$ a.e.\ $t\in [0,T]$, where the null set on which the equality fails depends upon $y$. Denote by $L^\infty_{w^*}([0,T],E^*)$ the space of (the weak$^*\!$ scalar equivalence classes of) weakly$^*\!$ scalarly measurable functions $p:[0,T]\to E^*$ such that there exists $c\ge 0$ with $| \langle p(t),y \rangle|\le c\| y \|$ a.e.\ $[0,T]$ for every $y\in E$, where the null set on which the inequality fails depends upon $y$. The infimum of all constants $c$ such that the inequality holds for every $y\in E$ is denoted by $\| p \|_\infty$, for which we have $\| p(t) \|\le \| p \|_\infty$ a.e.\ $t\in [0,T]$. The dual space of $L^1([0,T],E)$ is $L^\infty_{w^*}([0,T],E^*)$ with the norm $\| \cdot \|_\infty$ under the duality $\langle p,x\rangle=\int_0^T\langle p(t),x(t) \rangle dt$ for $p\in L^\infty_{w^*}([0,T],E^*)$ and $x\in L^1([0,T],E)$; see \cite[Theorem 12.2.11]{fa99}. The norm $\| p \|_\infty$ on $L^\infty_{w^*}([0,T],E^*)$ may not coincide with $\mathrm{ess.\,sup}_{t\in [0,T]}\| p(t) \|$, but such a discrepancy disappears whenever $E$ is separable; see \cite[Example 12.2.1 and Lemma 12.2]{fa99}. A subset $K$ of $L^1([0,T],E)$ is said to be \textit{uniformly integrable} if 
$$
\lim_{|A|\to 0}\sup_{x\in K}\int_A\| x(t) \|dt=0
$$
where $|A|$ is the Lebesgue measure of $A\subset [0,T]$. A set-valued mapping with nonempty values is called a \textit{multifunction}. A multifunction $G:[0,T]\twoheadrightarrow E$ is said to be integrably bounded if there exists $\gamma\in L^1([0,T])$ such that $\| x \|\le \gamma(t)$ for every $x\in G(t)$ and $t\in [0,T]$. 

The following weak compactness result is fundamental to the analysis in the sequel, which requires neither the reflexivity nor the separability of $E$. 

\begin{thm}[\citet{drs93}]
\label{diest}
If $K$ is a bounded, uniformly integrable subset of $L^1([0,T],E)$ such that there is a relatively weakly compact-\hspace{0pt}valued multifunction $G:[0,T]\twoheadrightarrow E$ with $x(t)\in G(t)$ for every $x\in K$ and $t\in [0,T]$, then $K$ is relatively weakly compact in $L^1([0,T],E)$. 
\end{thm}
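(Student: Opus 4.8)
The plan is to deduce the result from three classical pillars---the Eberlein--\v{S}mulian theorem, the scalar Dunford--Pettis criterion, and the weak sequential compactness of the values $G(t)$---and to reduce everything to a sequential statement. By the Eberlein--\v{S}mulian theorem it suffices to show that $K$ is relatively weakly \emph{sequentially} compact, i.e.\ that every sequence $\{x_n\}\subseteq K$ admits a subsequence converging weakly in $L^1([0,T],E)$. Since each $x_n$ is strongly measurable, the Pettis measurability theorem makes its essential range separable, so the closed linear span $E_0$ of $\bigcup_n x_n([0,T])$ (modulo null sets) is a separable subspace with $x_n(t)\in E_0$ a.e. As $L^1([0,T],E_0)$ is a closed subspace of $L^1([0,T],E)$ and the weak topology of a closed subspace is the restriction of the ambient one, relative weak compactness in $L^1([0,T],E_0)$ transfers to $L^1([0,T],E)$; hence I may and do assume $E$ separable. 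Weak convergence is tested against the dual $L^\infty_{w^*}([0,T],E^*)$ via the duality $\langle p,x\rangle=\int_0^T\langle p(t),x(t)\rangle\,dt$ recorded above.

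Next I construct the candidate limit. Replacing $G(t)$ by the weakly compact convex set $W(t):=\overline{\mathrm{co}}^{\,w}G(t)$ (intersected with $E_0$ after the reduction above), I still have $x_n(t)\in W(t)$ for all $n$ and $t$. For each fixed $p\in L^\infty_{w^*}([0,T],E^*)$ the scalar functions $t\mapsto\langle p(t),x_n(t)\rangle$ are dominated by $\|p\|_\infty\|x_n(t)\|$, hence form a bounded, uniformly integrable family in $L^1([0,T])$ inherited from the uniform integrability of $K$; the scalar Dunford--Pettis theorem then renders this family relatively weakly compact in $L^1([0,T])$. Using separability of $E$ to fix a countable norm-determining family $\{p_k\}\subseteq L^\infty_{w^*}([0,T],E^*)$ and exploiting weak metrizability of $L^1([0,T])$ on such compacta, a diagonal extraction yields a subsequence (not relabelled) along which $\langle p_k,x_n\rangle$ converges for every $k$. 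To upgrade these scalar limits to a single vector-valued limit I select it pointwise: for a.e.\ $t$ the sequence $\{x_n(t)\}$ lies in the weakly compact set $W(t)$, so by Eberlein--\v{S}mulian its weak cluster points are nonempty and contained in $W(t)$; a measurable selection of weak limits (a biting/Koml\'os type argument applied in $W(t)$) produces a strongly measurable $x$ with $x(t)\in W(t)$ a.e.\ representing the limit functional.

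It then remains to verify $x\in L^1([0,T],E)$ and $x_n\rightharpoonup x$. Integrability follows because uniform integrability furnishes the uniform control $\lim_{|A|\to 0}\sup_n\int_A\|x_n\|\,dt=0$, which via the biting lemma and Fatou's lemma delivers an integrable majorant for $\|x(\cdot)\|$ on the sets where the pointwise weak limits are realized. For $\langle p,x_n\rangle\to\langle p,x\rangle$ I first dispose of the determining family $\{p_k\}$ by construction, and then treat a general $p\in L^\infty_{w^*}([0,T],E^*)$ by approximation: separability of $E$ permits approximating $p$ in the relevant sense, while uniform integrability absorbs the error uniformly in $n$ and controls the contribution of the small sets where the approximation is poor.

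The step I expect to be the genuine obstacle is exactly this final gluing together with ruling out escape of mass. Because $E$ is not assumed reflexive, neither boundedness in $L^1([0,T],E)$ nor boundedness of the values confers any compactness on its own: two independent pathologies can destroy weak compactness---concentration of mass on sets of vanishing measure, and oscillation or spreading of the vector values producing an $\ell^1$-like, ``flatly concentrated'' sequence. The two hypotheses are tailored to defeat precisely these, uniform integrability killing the first and the relatively weakly compact values $W(t)$ killing the second by supplying pointwise weak sequential limits. The delicate point is that both must be neutralized \emph{simultaneously} and the pointwise weak limits \emph{assembled measurably} into one $L^1$-function; this measurable selection of weak limits (a vector Dunford--Pettis / Bocce-criterion argument in the spirit of \citet{drs93}) is the technical heart, and it is what a direct appeal to James' supremum theorem on $\overline{\mathrm{co}}\,K$ does not circumvent, since the pointwise maximizer of a functional over $W(t)$ need not belong to $\overline{\mathrm{co}}\,K$.
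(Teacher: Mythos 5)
First, a point of reference: the paper does not prove this statement at all --- Theorem~\ref{diest} is imported verbatim from \citet{drs93} and used as a black box, so your attempt cannot be measured against an in-paper argument; it has to stand on its own. It does not. The preliminary reductions are fine (Eberlein--\v{S}mulian to sequences, Pettis measurability to separable $E$, Krein--\v{S}mulian to replace $G(t)$ by weakly compact convex $W(t)$, scalar Dunford--Pettis plus a diagonal argument over a countable family $\{p_k\}$), but the step you yourself flag as the ``technical heart'' --- producing a strongly measurable $x$ with $x(t)\in W(t)$ a.e.\ that \emph{represents} the scalar limits --- is not carried out, and the specific device you propose cannot work. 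Pointwise weak cluster points are the wrong object: weak convergence in $L^1$ averages out oscillation, so the value of the weak limit at $t$ is in general \emph{not} a weak cluster point of $\{x_n(t)\}$, only an element of the closed convex hull of the cluster points. Concretely, take $x_n(t)=r_n(t)v$ with $r_n$ the Rademacher functions and $v\ne 0$ fixed: every hypothesis of the theorem holds, the weak $L^1$ limit is $0$, yet for every $t$ the cluster set of $\{x_n(t)\}$ is $\{\pm v\}$, which does not contain $0$. Hence no ``measurable selection of weak limits'' from the cluster sets can represent the limit functional, and invoking a biting/Koml\'os/Bocce argument ``in the spirit of \citet{drs93}'' is circular, since a vector-valued Koml\'os-type statement at this level of generality (nonreflexive $E$, only pointwise weakly compact ranges) is essentially equivalent to the theorem being proved.

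What actually closes this gap in \citet{drs93} is a vector-measure argument, not a pointwise selection: associate to the sequence the indefinite integrals $F_n(A)=\int_A x_n(t)\,dt$; uniform integrability makes $\{F_n\}$ uniformly countably additive, so (Vitali--Hahn--Saks, plus the pointwise weakly compact ranges to keep values in $E$ rather than $E^{**}$) a subnet converges setwise to a countably additive, $\mu$-continuous measure $F$ of bounded variation whose average ranges $F(A)/|A|$ lie in weakly compact convex sets built from the $W(t)$; one then invokes the classical fact that weakly compact convex sets have the Radon--Nikod\'ym property to obtain a density $x\in L^1([0,T],E)$ with $F(A)=\int_A x\,dt$, and finally checks that $x_n\rightharpoonup x$. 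If you want to salvage your outline, this RNP-for-weakly-compact-sets route (or, alternatively, first passing to convex combinations converging a.e.\ weakly and justifying that passage independently) is the missing ingredient; as written, your proof establishes only the easy scalar limits and asserts the hard part.
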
 

\noindent
It is easy to see that a sufficient condition for the boundedness and uniform integrability of $K$ is \textit{integrable boundedness}: There exists $\varphi\in L^1([0,T])$ such that $\| x(t) \|\le \varphi(t)$ for every $x\in K$ and $t\in [0,T]$. 

The next result is useful for later use.

\begin{lem}
\label{lem1}
If $\{ x_n \}_{n\in \N}$ is a sequence in $L^1([0,T],E)$ converging weakly to $x$, then for each $n\in \N$ there exists $\hat{x}_n\in L^1([0,T],E)$ such that $\hat{x}_n$ is a convex combination of $\{ x_k\mid k\ge n \}$ and $\hat{x}_{n(k)}(t)\to x(t)$ strongly in $E$ a.e.\ $t\in [0,T]$ for some subsequence $\{ \hat{x}_{n(k)} \}_{k\in \N}$ of $\{ \hat{x}_n \}_{n\in \N}$. 
\end{lem}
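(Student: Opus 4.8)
The plan is to combine Mazur's lemma with the elementary fact that norm convergence in $L^1$ forces almost everywhere convergence along a subsequence; notably, neither step uses reflexivity of $E$, so the argument is valid in the present nonreflexive setting.

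First I would apply Mazur's theorem in the Banach space $L^1([0,T],E)$. Since $\{x_n\}_{n\in\N}$ converges weakly to $x$, every tail $\{x_k\mid k\ge n\}$ also converges weakly to $x$, so $x$ lies in the weak closure of the convex hull $\operatorname{conv}\{x_k\mid k\ge n\}$. Because the norm-closed convex hull of a subset of a normed space coincides with its weakly closed convex hull (a Hahn--Banach separation argument, needing no reflexivity), $x$ belongs to the norm closure of $\operatorname{conv}\{x_k\mid k\ge n\}$. Hence for each $n\in\N$ I can select a finite convex combination $\hat{x}_n$ of $\{x_k\mid k\ge n\}$ with $\|\hat{x}_n-x\|_1\le 1/n$; as a finite convex combination of Bochner integrable functions, each $\hat{x}_n$ is again an element of $L^1([0,T],E)$. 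This gives $\hat{x}_n\to x$ strongly in $L^1([0,T],E)$, that is, $\int_0^T\|\hat{x}_n(t)-x(t)\|\,dt\to 0$.

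Second I would pass from this strong $L^1$ convergence to almost everywhere convergence along a subsequence, which is the usual Riesz-type extraction. Choosing indices $n(1)<n(2)<\cdots$ so that $\|\hat{x}_{n(k)}-x\|_1\le 2^{-k}$, I have $\sum_{k}\int_0^T\|\hat{x}_{n(k)}(t)-x(t)\|\,dt<\infty$. The integrands $t\mapsto\|\hat{x}_{n(k)}(t)-x(t)\|$ are genuine real-valued measurable (indeed integrable) functions, since $\hat{x}_{n(k)}-x$ is strongly measurable; thus the monotone convergence theorem yields $\int_0^T\sum_k\|\hat{x}_{n(k)}(t)-x(t)\|\,dt<\infty$, whence $\sum_k\|\hat{x}_{n(k)}(t)-x(t)\|<\infty$ and therefore $\|\hat{x}_{n(k)}(t)-x(t)\|\to 0$ for almost every $t\in[0,T]$. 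This delivers the claimed strong pointwise convergence in $E$ along the subsequence $\{\hat{x}_{n(k)}\}_{k\in\N}$.

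This lemma is essentially routine, so I do not expect a genuine obstacle; the only points requiring care are that the almost everywhere convergence can be asserted only after passing to a subsequence (not for the full sequence $\{\hat{x}_n\}$), and that the pointwise norms $\|\hat{x}_{n(k)}(t)-x(t)\|$ are legitimately measurable real-valued functions so that the monotone convergence argument applies. Both are guaranteed here because all functions involved are Bochner integrable and hence strongly measurable.
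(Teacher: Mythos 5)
Your proposal is correct and follows essentially the same route as the paper: Mazur's lemma (which you rederive via Hahn--Banach separation) applied to each tail $\{x_k \mid k\ge n\}$ to get $\hat{x}_n\to x$ strongly in $L^1([0,T],E)$, followed by extraction of an a.e.\ convergent subsequence, which you make explicit with the standard Riesz summability argument where the paper simply invokes the fact. No gaps; the extra care you take with measurability of $t\mapsto\|\hat{x}_{n(k)}(t)-x(t)\|$ and with passing to a subsequence is exactly right.
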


\begin{proof}
By Mazur's lemma (see \cite[Corollary V.3.14]{ds58}), for each $n\in \N$ there exist a sequence $\{ x^n_i \}_{i\in \N}$ in $L^1([0,T],E)$ such that $x^n_i$ is a convex combination of $\{ x_k\mid k\ge n \}$ with $x^n_i\to x$ strongly in $L^1([0,T_1],E)$. Hence, one can find $i(n)\in \N$ such that $\| x^n_{i(n)}-x \|_1<1/n$. Let $\hat{x}_n=x^n_{i(n)}$. Then by construction, $\hat{x}_n$ is a convex combination of $\{ x_k\mid k\ge n \}$ with $\hat{x}_n\to x$ strongly in $L^1([0,T_1],E)$. Therefore, one can extract a subsequence $\{ \hat{x}_{n(k)} \}_{k\in \N}$ of $\{ \hat{x}_n \}_{n\in \N}$ satisfying $\hat{x}_{n(k)}(t)\to x(t)$ strongly in $E$ a.e.\ $t\in [0,T]$. 
\end{proof}

\subsection{The Space of Continuous Functions}
If $X$ is a weakly compact subset of a separable Banach space $E$, then $X$ is metrizable with respect to the weak topology of $E$; see \cite[Theorem V.6.3]{ds58}. In this case, denote by $C_w(\Omega,X)$ (resp.\ $C(\Omega,X)$) the space of continuous functions from $\Omega$ to $X$ with respect to the weak (resp.\ norm) topology, endowed with the topology of uniform convergence on compacta. Since $X$ is a compact metric space for the weak topology,  $C_w(\Omega,X)$ is metrized via
$$
d(x_1,x_2)=\sup_{t\in \Omega}\rho(x_1(t),\rho_2(t)), \quad x_1,x_2\in C_w(\Omega,X)
$$
where $\rho$ is a consistent metric on $X$ for the weak topology; see \cite[Definition 8.1]{du66}. By definition, we also have $C(\Omega,X)\subset C_w(\Omega,X)$. Let $B^\varepsilon_w(y)$ (resp.\ $B^\varepsilon(y)$) be the $\varepsilon$-neighborhood of $y\in X$ with respect to the weak (resp.\ norm) topology. A subset $K$ of $C_w(\Omega,X)$ (resp.\ $C(\Omega,X))$ is said to be \textit{equicontinuous} at $t\in \Omega$ with respect to the weak (resp.\ norm) topology if for every $\varepsilon>0$ there exists a $\delta$-neighborhood $B^\delta(t)$ of $t$ such that $x(B^\delta(t))\subset B^\varepsilon_w(x(t))$ (resp.\ $x(B^\delta(t))\subset B^\varepsilon(x(t))$) for every $x\in K$. A subset $K$ of $C_w(\Omega,X)$ (resp.\ $C(\Omega,X))$ is said to be equicontinuous on $\Omega$ with respect to the weak (resp.\ norm) topology if it is equicontinuous with respect to the weak (resp.\ norm) topology at each point in $\Omega$. It is easy to see that if $K\subset C(\Omega,X)$ is equicontinuous with respect to the norm topology, then so it is with respect to the weak topology. Hence, if $K\subset C(\Omega,X)$ is a family of locally absolutely continuous functions, then it is equicontinuous both with respect to the norm and the weak topologies whenever $X$ is a weakly compact subset of a separable Banach space $E$.

\section{Existence of Optimal Arcs}
\subsection{Assumptions}


The recursive variational problem under investigation is:
\begin{equation}
\label{P}
\begin{aligned}
& \min_{x\in W^{1,1}_{\mathrm{loc}}(\Omega,E)}\int_\Omega L(t,x(t),x'(t))F\left( t,\int_0^tf(s,x(s),x'(s))ds \right)dt \\
& \hspace{0.5cm} x'(t)\in \Gamma(t,x(t)) \text{ a.e.\ $t\in \Omega$ and }x(0)=\xi_0\in X
\end{aligned}
 \tag{P}
\end{equation}
where $L:\Omega\times E\times E\to (-\infty,+\infty]$ is a cost function, $f:\Omega\times E\times E\to (-\infty,+\infty]$ is a discount function, $F:\Omega\times \R\to \R$ corresponds to a generalized form of the exponential function, and $\Gamma:\Omega\times X\twoheadrightarrow E$ describes a constraint governed by the multivalued dynamical system. The graph of $\Gamma$ is denoted by $\mathrm{gph}\,\Gamma=\{ (t,y,z)\in \Omega\times X\times E\mid z\in \Gamma(t,y) \}$. The set of \textit{admissible arcs} is given by
$$
\A=\{ x\in W^{1,1}_{\mathrm{loc}}(\Omega,E) \mid x'(t)\in \Gamma(t,x(t)) \text{ a.e.\ $t\in \Omega$ and }x(0)=\xi_0\}. 
$$
An admissible arc that is a solution to \eqref{P} is called an \textit{optimal arc}.

Throughout the paper, $E$ is assumed to be a separable Banach space. Assumptions for the primitive $\{ L,F,f,X,\Gamma \}$ are given below.  

\begin{assmp}
\label{assmp1}
$X$ is a weakly compact subset of $E$.
\end{assmp}

\begin{assmp}
\label{assmp2}
\begin{enumerate}[\rm (i)] 
\item There is an integrably bounded, relatively weakly compact-valued multifunction $G:\Omega\twoheadrightarrow E$ such that $\Gamma(t,y)\subset G(t)$ for every $(t,y)\in \Omega\times X$. 
\item $L(t,y,z)F(t,\cdot)$ is a nondecreasing function on $\R$ for every $(t,y,z)\in \mathrm{gph}\,\Gamma$.
\item $L$ and $F$ are measurable functions such that there exist $\alpha_1,\alpha_2\in L^1(\Omega)$ and $a\in \R$ satisfying 
$$
|L(t,y,z)F(t,r)|\le \alpha_1(t)+\alpha_2(t)\| y \|+a \| z \|
$$
for every $(t,y,z)\in \mathrm{gph}\,\Gamma$ and $r\in \R$. 
\item $f$ is a measurable function such that there exist $\beta_1,\beta_2\in L^1_{\mathrm{loc}}(\Omega)$ and $b\in \R$ satisfying 
$$
|f(t,y,z)|\le \beta_1(t)+\beta_2(t)\| y \|+b \| z \|
$$
for every $(t,y,z)\in \mathrm{gph}\,\Gamma$. 
\end{enumerate}
\end{assmp}

Since there exists $\gamma\in L^1(\Omega)$ such that $\| x'(t) \|\le \gamma(t)$ for every $x\in \A$, we have $\| x(t) \|\le \| \xi_0\|+\int_0^\infty \gamma(s)ds:=c<+\infty$. Hence, letting $\alpha(t):=\alpha_1(t)+c\alpha_2(t)+a\gamma(t)$ and $\beta(t):=\beta_1(t)+c\beta_2(t)+b\gamma(t)$ yields 
\begin{equation}
\label{eq1}
\left| L(t,x(t),x'(t))F\left( t,\int_0^tf(s,x(s),x'(s))ds \right) \right|\le  \alpha(t) \quad\text{a.e.\ $t\in \Omega$} 
\end{equation}
for every $x\in \A$ with $\alpha\in L^1(\Omega)$ and 
\begin{equation}
\label{eq2}
| f(t,x(t),x'(t))|\le \beta(t) \quad\text{a.e.\ $t\in \Omega$} 
\end{equation}
for every $x\in \A$ with $\beta\in L^1_{\mathrm{loc}}(\Omega)$. This guarantees that the recursive integral functional for \eqref{P} is bounded on $\A$ under Assumption \ref{assmp2} whenever $\A$ is nonempty.

\begin{assmp}
\label{assmp3}
$\A$ is nonempty.
\end{assmp}
\noindent
In many economic applications, it is reasonable to impose $0\in \Gamma(t,\xi_0)$ for every $t\in \Omega$ (the possibility of ``inactivity''), which implies that the constant arc $x(t)\equiv \xi_0$ is feasible, or there exists $r>0$ such that $-r e^{-rt}\xi_0\in \Gamma(t,e^{-rt}\xi_0)$ for every $t\in \Omega$, which implies that the exponential decay $x(t)=e^{-rt}\xi_0$ is feasible. We do not assume here such simple conditions. Instead, we provide in Section 4 a sufficient condition for the existence of admissible arcs.   

Define the multifunction $\tilde{\Gamma}:\Omega\times \R\times X\twoheadrightarrow \R\times \R\times E$ by
$$
\tilde{\Gamma}(t,r,y)=\left\{ (a^1,a^2,z)\in \R\times \R\times E \left| \begin{array}{l} a^1\ge L(t,y,z)F(t,r) \\ a^2\ge f(t,y,z),\,z\in \Gamma(t,y) \end{array}
 \right.\right\}.
$$
Then $\tilde{\Gamma}(t,r,y)$ is the augmented velocity set. Denote by $\overline{\mathrm{co}}\,\{ \cdots \}$ the closed convex hull in $\R\times \R\times E$ and let $B^\delta_\mathit{w}(r,y)$ be the open ball with center $(r,y)\in \R\times X$ with radius $\delta>0$ with respect to the metric $\rho$ on $X$ consistent with the weak topology of $X$.  

\begin{assmp}[Cesari property]
\label{ces}
For every $(t,r,y)\in \Omega\times \R\times X$: 
$$
\bigcap_{\delta>0}\overline{\mathrm{co}}\,\tilde{\Gamma}(t,B^\delta_\mathit{w}(r,y))=\tilde{\Gamma}(t,r,y). 
$$ 
\end{assmp}

Assumption \ref{ces} corresponds to the condition imposed in \cite{ba93,ca90} for finite-dimensional control systems. It is satisfied whenever $\tilde{\Gamma}(t,\cdot,\cdot):\R\times X\twoheadrightarrow \R\times \R\times E$ is an upper semicontinuous multifunction for the weak topology of $X$ and the norm topology of $E$ with norm closed, convex values for every $t\in \Omega$; see \cite[Proposition 4.2]{ly95}. A verifiable sufficient condition for the Cesari property is as follows. 

\begin{assmp*}
\label{assmp*}
\begin{enumerate}[\rm (i)]
\item $\Gamma(t,\cdot):X\twoheadrightarrow E$ is an upper semicontinuous multifunction for the weak topology of $X$ and the norm topology of $E$ with norm closed, convex values for every $t\in \Omega$. 
\item $L(t,\cdot,\cdot)F(t,\cdot)$ is lower semicontinuous on $X\times E\times \R$ for the weak topology of $X$ and the norm topology of $E$ for every $t\in \Omega$.
\item $L(t,y,\cdot)F(t,r)$ is convex on $E$ for every $(t,r,y)\in \Omega\times \R\times X$.
\item $f(t,\cdot,\cdot)$ is lower semicontinuous on $X\times E$ for the weak topology of $X$ and the norm topology of $E$ for every $t\in \Omega$.
\item $f(t,y,\cdot)$ is convex on $E$ for every $(t,y)\in \Omega\times E$.  
\end{enumerate}
\end{assmp*}

\begin{thm}
If Assumptions \ref{assmp1} and 3.4$'$hold, then Assumption \ref{ces} does. 
\end{thm}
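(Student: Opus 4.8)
The plan is to reduce the Cesari property to the sufficient condition quoted just before the statement: by \cite[Proposition 4.2]{ly95} it suffices to show that, for each fixed $t\in\Omega$, the augmented multifunction $(r,y)\mapsto\tilde\Gamma(t,r,y)$ is upper semicontinuous from $\R\times X$ (with the usual topology on $\R$ and the weak topology on $X$) into $\R\times\R\times E$ (with the usual topology on the two scalar factors and the norm topology on $E$), and that its values are norm closed and convex. Since $E$ is separable and $X$ is weakly compact by Assumption \ref{assmp1}, the weak topology on $X$ is metrizable (Section 2.3), so each of these properties can be verified along sequences. First I would fix $t$ and suppress it from the notation.

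Convexity and closedness of the values are routine. For fixed $(r,y)$ the set $\tilde\Gamma(t,r,y)$ is the intersection of $\R\times\R\times\Gamma(t,y)$ with the two epigraphs $\{a^1\ge L(t,y,z)F(t,r)\}$ and $\{a^2\ge f(t,y,z)\}$, so convexity follows from the convexity of $\Gamma(t,y)$ in Assumption~3.4$'$(i) together with the convexity of $z\mapsto L(t,y,z)F(t,r)$ and of $z\mapsto f(t,y,z)$ in (iii) and (v). For closedness, the maps $z\mapsto L(t,y,z)F(t,r)$ and $z\mapsto f(t,y,z)$ are lower semicontinuous in the norm of $E$, being the norm restrictions of the jointly lower semicontinuous maps in (ii) and (iv); hence their epigraphs are norm closed, and since $\Gamma(t,y)$ is norm closed by (i), so is the intersection.

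The substantive step is upper semicontinuity, which I would establish by the no-escape criterion: taking $(r_n,y_n)\to(r_0,y_0)$ with $y_n\rightharpoonup y_0$, I would show that no sequence $(a^1_n,a^2_n,z_n)\in\tilde\Gamma(t,r_n,y_n)$ can eventually leave a prescribed norm-open neighborhood of $\tilde\Gamma(t,r_0,y_0)$. The velocity components are controlled by (i): upper semicontinuity of $\Gamma$ from the weak to the norm topology forces $\mathrm{dist}(z_n,\Gamma(t,y_0))\to 0$ in norm, while Assumption \ref{assmp2}(i) confines the $z_n$ to the relatively weakly compact set $G(t)$, so along a subsequence $z_n$ converges weakly to some $\zeta_0\in\Gamma(t,y_0)$ (the latter being weakly compact as a convex norm-closed subset of $G(t)$). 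The scalar components are pinned from below by $L(t,y_n,z_n)F(t,r_n)$ and $f(t,y_n,z_n)$ and bounded through the growth estimates \eqref{eq1} and \eqref{eq2}, so the lower semicontinuity in (ii) and (iv) delivers the limiting inequalities that place the limit point inside $\tilde\Gamma(t,r_0,y_0)$. With upper semicontinuity in hand, \cite[Proposition 4.2]{ly95} closes the argument.

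The hard part will be this last step, for two reasons. First, the target carries the norm topology on $E$ while $\Gamma$ and $G$ supply only relative \emph{weak} compactness, so one cannot extract norm-convergent subsequences of the velocities $z_n$; the entire transfer of information from $y_n\rightharpoonup y_0$ to the velocity limit must therefore ride on the norm-valued upper semicontinuity assumed in (i), which is precisely the hypothesis tailored to bridge this gap. Second, one must reconcile the merely lower semicontinuous integrands in (ii) and (iv)---the correct one-sided control in the epigraphic directions $a^1,a^2$---with the upper semicontinuity demanded of $\tilde\Gamma$; here the convexity in (iii) and (v) is what guarantees that passing to the closed convex hull in the Cesari intersection does not enlarge $\tilde\Gamma(t,r,y)$ beyond itself.
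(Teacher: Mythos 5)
Your route through \cite[Proposition 4.2]{ly95} cannot be completed, because its key intermediate claim---that $(r,y)\mapsto\tilde{\Gamma}(t,r,y)$ is upper semicontinuous from $\R\times X$ (weak) into $\R\times\R\times E$ (norm)---is false under Assumptions \ref{assmp1} and 3.4$'$, and remains false even if Assumption \ref{assmp2} is added. The obstruction is the unboundedness of the epigraph directions $a^1,a^2$: a norm-open neighborhood of $\tilde{\Gamma}(t,r_0,y_0)$ may pinch arbitrarily tightly around the velocity section as $a^1,a^2\to\infty$. Take $E=\R$, $X=[0,1]$, $\Gamma(t,y)=[0,1+y]$, $LF\equiv 0$, $f\equiv 0$; every item of Assumption 3.4$'$ (and of Assumption \ref{assmp2}, with $G\equiv[0,2]$) holds, and $\tilde{\Gamma}(t,r,y)=[0,\infty)\times[0,\infty)\times[0,1+y]$. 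At $y_0=\tfrac12$ the set $V=\{(a^1,a^2,z): z<\tfrac32+(1+|a^1|)^{-1}\}$ is open and contains $\tilde{\Gamma}(t,r_0,y_0)$, yet for $y_n=\tfrac12+\tfrac1n$ the point $(n,0,\tfrac32+\tfrac1n)$ belongs to $\tilde{\Gamma}(t,r_0,y_n)$ and lies outside $V$ for every $n$: an escaping sequence exists, and it necessarily has $a^1_n\to\infty$, so no compactness argument can catch it. This is exactly the step your outline gets wrong when it asserts that the scalar components are ``bounded through the growth estimates \eqref{eq1} and \eqref{eq2}'': those estimates (which come from Assumption \ref{assmp2}, not even among the hypotheses of this theorem) bound the envelope functions $LF$ and $f$ on $\mathrm{gph}\,\Gamma$, not the coordinates $a^1,a^2$ of points of $\tilde{\Gamma}$, which by definition fill the entire half-lines above the envelopes. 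Worse, whenever the envelopes are finite on $\mathrm{gph}\,\Gamma$, Berge upper semicontinuity of $\tilde{\Gamma}(t,\cdot,\cdot)$ at $(r_0,y_0)$ actually forces $\Gamma(t,y)\subset\Gamma(t,y_0)$ for all $y$ weakly near $y_0$ (test it with the open sets $\{(a^1,a^2,z): \mathrm{dist}(z,\Gamma(t,y_0))(1+|a^1|)<\eta\}$), a monotonicity that Assumption 3.4$'$(i) in no way provides. This is precisely why the Cesari property exists as a weaker substitute for upper semicontinuity of augmented velocity sets, and why the paper verifies the identity in Assumption \ref{ces} directly rather than through the u.s.c.\ sufficient condition.

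There is a second, independent defect: Assumptions 3.4$'$(ii) and (iv) assert lower semicontinuity for the \emph{norm} topology in the velocity variable, whereas the compactness you invoke (relative weak compactness of $G(t)$) yields only \emph{weakly} convergent subsequences of $z_n$, so even for bounded escaping sequences your ``limiting inequalities'' do not follow as stated. The paper's proof sidesteps both problems by arguing on the Cesari intersection itself: it first shows, as you do correctly (this half of your proposal coincides with the paper's), that each value $\tilde{\Gamma}(t,r,y)$ is norm closed and convex; it then takes an arbitrary point $(a^1,a^2,z)\in\bigcap_{\delta>0}\overline{\mathrm{co}}\,\tilde{\Gamma}(t,B^\delta_\mathit{w}(r,y))$, which by construction comes with sequences $(r_n,y_n)\to(r,y)$ and $(a^1_n,a^2_n,z_n)\in\tilde{\Gamma}(t,r_n,y_n)$ converging \emph{in norm} to $(a^1,a^2,z)$. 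Because the target point is fixed, the unbounded epigraph directions never enter; because $z_n\to z$ in norm, hypotheses (ii) and (iv) apply and yield $a^1\ge L(t,y,z)F(t,r)$ and $a^2\ge f(t,y,z)$; and the closed graph of $\Gamma(t,\cdot)$ gives $z\in\Gamma(t,y)$. You should abandon the upper semicontinuity reduction and prove the set identity directly in this manner.
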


\begin{proof}
We first show that $\tilde{\Gamma}(t,r,y)$ is norm closed and convex for every $(t,r,y)\in \Omega\times \R\times X$, that is, $\overline{\mathrm{co}}\,\tilde{\Gamma}(t,r,y)=\tilde{\Gamma}(t,r,y)$. To this end, let $(a^1_n,a^2_n,z_n)\in \tilde{\Gamma}(t,r,y)$ for each $n\in \N$ and $(a^1_n,a^2_n,z_n)\to (a^1,a^2,z)\in \R\times \R\times E$. It follows from the definition of $\tilde{\Gamma}(t,r,y)$ that $a^1_n\ge L(t,y,z_n)F(t,r)$, $ a^2_n\ge f(t,y,z_n)$, and $z_n\in \Gamma(t,y)$ for each $n\in \N$. Since $L(t,y,\cdot)F(t,r)$ and $f(t,y,\cdot)$ are lower semicontinuous on $E$ for every $(t,r,y)\in \Omega\times \R\times X$, we have $a^1\ge L(t,y,z)F(t,r)$ and $ a^2\ge f(t,y,z)$. Also, the closedness of $\Gamma(t,y)$ yields $z\in \Gamma(t,y)$. Hence, $(a^1,a^2,z)\in \tilde{\Gamma}(t,r,y)$. To show the convexity of $\tilde{\Gamma}(t,r,y)$, take any $(a^1_0,a^2_0,z_0)$ and $(a^1_1,a^2_1,z_1)$ in $\tilde{\Gamma}(t,r,y)$, and $\lambda\in [0,1]$. We then have $a^1_i\ge L(t,y,z_i)F(t,r)$, $ a^2_i\ge f(t,y,z_i)$, and $z_i\in \Gamma(t,y)$ for $i=0,1$. Since $L(t,y,\cdot)F(t,r)$ and $f(t,y,\cdot)$ are convex on $E$ for every $(t,r,y)\in \Omega\times \R\times X$, we have $\lambda a^1_0+(1-\lambda)a^1_1\ge L(t,y,\lambda z_0+(1-\lambda)z_1)F(t,r)$ and $\lambda a^2_0+(1-\lambda)a^2_1\ge f(t,y,\lambda z_0+(1-\lambda)z_1)$. Also, the convexity of $\Gamma(t,y)$ yields $\lambda z_0+(1-\lambda)z_1\in \Gamma(t,y)$. Hence, $\lambda(a^1_0,a^2_0,z_0)+(1-\lambda)(a^1_1,a^2_1,z_1)\in \tilde{\Gamma}(t,r,y)$. 

To demonstrate the stated equality, let $(t,r,y)\in \Omega\times \R\times X$ be arbitrarily fixed and take any $(a^1,a^2,z)\in \bigcap_{\delta>0}\overline{\mathrm{co}}\,\tilde{\Gamma}(t,B^\delta_\mathit{w}(r,y))$. By choosing $\delta=1/n$ with $n\in \N$, one can extract sequences $\{(r_n,y_n) \}_{n\in \N}$ in $\R\times X$ with $(r_n,y_n)\to (r,y)$ and $\{(a^1_n,a^2_n,z_n) \}_{n\in \N}$ in $\R\times \R\times E$ with $(a^1_n,a^2_n,z_n)\in \overline{\mathrm{co}}\,\tilde{\Gamma}(t,r_n,y_n)=\tilde{\Gamma}(t,r_n,y_n)$ for each $n\in \N$ and $(a^1_n,a^2_n,z_n)\to (a^1,a^2,z)$. We then have $a^1_n\ge L(t,y_n,z_n)F(t,r_n)$, $ a^2_n\ge f(t,y_n,z_n)$, and $z_n\in \Gamma(t,y_n)$ for each $n\in \N$. Since $L(t,\cdot,\cdot)F(t,\cdot)$ and $f(t,\cdot,\cdot)$ are lower semicontinuous on $X\times E\times \R$ for every $t\in \Omega$, we have $a^1\ge L(t,y,z)F(t,r)$ and $ a^2\ge f(t,y,z)$. In view of the weak compactness of $X$, the graph closedness of the compact valued, upper semicontinuous multifunction $\Gamma(t,\cdot):X\twoheadrightarrow E$ yields $z\in \Gamma(t,y)$. Hence, $(a^1,a^2,z)\in \tilde{\Gamma}(t,r,y)$. This implies the inclusion $\bigcap_{\delta>0}\overline{\mathrm{co}}\,\tilde{\Gamma}(t,B^\delta_\mathit{w}(r,y))\subset\tilde{\Gamma}(t,r,y)$. The converse inclusion is obvious. 
\end{proof}

\begin{rem}
To guarantee the integrability of the recursive integrand on $\A$ over the infinite horizon, we impose integrable boundedness on $\Gamma$ in Assumption \ref{assmp2}(i) and the standard linear growth conditions on the integrands $LF$ and $f$ in Assumptions \ref{assmp2}(iii) and (iv), which leads to conditions \eqref{eq1} and \eqref{eq2}. An alternative hypothesis adopted in \cite{bbs89} is that (a) there exists $\gamma\in L^1_{\mathrm{loc}}(\Omega)$ such that $\| x'(t) \|\le \gamma(t)$ for every $x\in \A$ a.e.\ $t\in \Omega$ together with the additional assumptions that (b) $L\ge 0$ and $F\ge 0$; (c) there exists $\psi\in L^1_{\mathrm{loc}}(\Omega)$ with $\psi\ge 0$ such that $f(t,y,z)\ge -\psi(t)$ for every $(t,y,z)\in \mathrm{gph}\,\Gamma$. Conditions (b) and (c) are imposed also in \cite{ba93,ca90} with an alternative growth condition on $LF$. For the relevancy of condition \eqref{eq1} to remove condition (b), see \cite[Remark 6.2]{ca90}. 
\end{rem}

\subsection{An Existence Result}
We are now ready to state the first main result of the paper, which is an infinite-dimensional analogue of \cite{ba93,bbs89,ca90}, whose proof exploits Cesari's lower closure theorem. The direct proof presented here is instead based on the finite horizon truncation and the standard diagonalization procedure for a minimizing sequence. 

\begin{prop}
\label{prop}
Let $E$ be a separable Banach space. Under Assumptions \ref{assmp1}--\ref{ces}, there exists an optimal arc for \eqref{P}. 
\end{prop}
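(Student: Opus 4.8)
The plan is to take a minimizing sequence $\{ x_n \}$ in $\A$, extract a limiting arc by a finite-horizon compactness argument followed by a diagonal procedure, and then verify both its admissibility and its optimality through a lower closure argument driven by the Cesari property. Write $J$ for the recursive functional and $m=\inf_{x\in\A}J(x)$; by \eqref{eq1} the functional is bounded on $\A$, so $m$ is finite and $J(x_n)\to m$. For each $n$ set $a^1_n(t)=L(t,x_n(t),x_n'(t))F(t,r_n(t))$ with $r_n(t)=\int_0^tf(s,x_n(s),x_n'(s))ds$, together with $a^2_n(t)=f(t,x_n(t),x_n'(t))$, so that $(a^1_n(t),a^2_n(t),x_n'(t))\in\tilde{\Gamma}(t,r_n(t),x_n(t))$ a.e.

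First I would establish compactness. Since $x_n'(t)\in\Gamma(t,x_n(t))\subset G(t)$ with $G$ integrably bounded and relatively weakly compact-valued, the restrictions of $\{x_n'\}$ to each $[0,T]$ satisfy the hypotheses of Theorem \ref{diest}; the scalar sequences $\{a^1_n\}$ and $\{a^2_n\}$ are dominated by $\alpha$ and $\beta$ via \eqref{eq1} and \eqref{eq2} and are likewise relatively weakly compact in $L^1([0,T])$. A diagonalization over $T=1,2,\dots$ yields a single subsequence (not relabeled) along which $x_n'\rightharpoonup u$, $a^1_n\rightharpoonup a^1$, and $a^2_n\rightharpoonup a^2$ weakly in $L^1([0,T],\cdot)$ for every $T$, defining $u\in L^1_{\mathrm{loc}}(\Omega,E)$ and $a^1,a^2\in L^1_{\mathrm{loc}}(\Omega)$. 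Testing these convergences against $\mathbf{1}_{[0,t]}$ (an element of the dual by Section 2.2) gives $x_n(t)=\xi_0+\int_0^tx_n'(s)ds\rightharpoonup\xi_0+\int_0^tu(s)ds=:x(t)$ and $r_n(t)\to r(t):=\int_0^ta^2(s)ds$ pointwise. Because $\{x_n\}$ is a family of locally absolutely continuous functions valued in the weakly compact set $X$, it is equicontinuous for the weak topology, so the Arzel\`a--Ascoli theorem of Section 2.3 confirms that $x\in C_w(\Omega,X)$ with uniform convergence on compacta; in particular $x(t)\in X$ for every $t$ and $x(0)=\xi_0$, whence $x\in W^{1,1}_{\mathrm{loc}}(\Omega,E)$ with $x'=u$.

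The heart of the argument is the lower closure, which I expect to be the main obstacle. I would apply Lemma \ref{lem1} to the weakly convergent sequence $\{(a^1_n,a^2_n,x_n')\}$ in $L^1([0,T],\R\times\R\times E)$ to obtain convex combinations $(\hat a^1_n,\hat a^2_n,\hat x_n')$ of tails converging strongly, hence a.e.\ along a subsequence, to $(a^1,a^2,u)$. Fix such a $t$ and $\delta>0$: since $(r_n(t),x_n(t))\to(r(t),x(t))$ in the weak metric $\rho$, for all large $n$ every index $k\ge n$ satisfies $(r_k(t),x_k(t))\in B^\delta_\mathit{w}(r(t),x(t))$, so each $(a^1_k(t),a^2_k(t),x_k'(t))\in\tilde{\Gamma}(t,B^\delta_\mathit{w}(r(t),x(t)))$ and the convex combination lies in $\overline{\mathrm{co}}\,\tilde{\Gamma}(t,B^\delta_\mathit{w}(r(t),x(t)))$. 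Passing to the limit and intersecting over $\delta>0$, Assumption \ref{ces} gives $(a^1(t),a^2(t),u(t))\in\tilde{\Gamma}(t,r(t),x(t))$ a.e.; in particular $x'(t)=u(t)\in\Gamma(t,x(t))$ a.e., so that $x\in\A$.

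Finally I would extract optimality from the same inclusion. It yields $a^2(t)\ge f(t,x(t),x'(t))$ a.e., hence $r(t)\ge\int_0^tf(s,x(s),x'(s))ds$, and $a^1(t)\ge L(t,x(t),x'(t))F(t,r(t))$; the monotonicity in Assumption \ref{assmp2}(ii), applicable since $(t,x(t),x'(t))\in\mathrm{gph}\,\Gamma$, upgrades this to $a^1(t)\ge L(t,x(t),x'(t))F(t,\int_0^tf(s,x(s),x'(s))ds)$. Integrating over $[0,T]$, using $\int_0^Ta^1_n\,dt\to\int_0^Ta^1\,dt$ and the uniform tail bound $\left|\int_T^\infty a^1_n\,dt\right|\le\int_T^\infty\alpha\,dt$ from \eqref{eq1}, gives $\int_0^TL(t,x,x')F(t,\int_0^tf\,ds)\,dt\le m+\int_T^\infty\alpha\,dt$. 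Letting $T\to\infty$ and invoking dominated convergence (the integrand for $x\in\A$ is dominated by $\alpha$) yields $J(x)\le m$, and since $x\in\A$ forces $J(x)\ge m$, the arc $x$ is optimal.
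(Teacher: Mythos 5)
Your proposal is correct and follows essentially the same route as the paper's own proof: a minimizing sequence, Theorem \ref{diest} plus diagonalization for weak $L^1$ limits of the derivatives and of the integrand sequences, the Arzel\`a--Ascoli theorem in $C_w(\Omega,X)$, Lemma \ref{lem1} (Mazur) to get a.e.-convergent convex combinations, the Cesari property for lower closure, and Assumption \ref{assmp2}(ii) to pass from $a^1(t)\ge L(t,x(t),x'(t))F(t,r(t))$ to the true recursive integrand. The only cosmetic differences are that you extract all weak limits in a single diagonalization and establish optimality by finite-horizon truncation with the tail bound $\int_T^\infty\alpha\,dt$, whereas the paper takes the limits sequentially and instead integrates the convex combinations over all of $\Omega$ with dominated convergence --- both variants are valid.
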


\begin{proof}
Let $\{ x_n \}_{n\in \N}\subset \A$ be a minimizing sequence for \eqref{P}, whose existence is guaranteed in Assumption \ref{assmp2}. Since $G$ is integrably bounded, by Assumption \ref{assmp1}(ii), there exists $\gamma\in L^1(\Omega)$ such that $\| x_n'(t) \|\le \gamma(t)$ for every $t\in \Omega$ and $n\in \N$. In view of $x_n(t)=\int_0^tx_n'(s)ds+\xi_0$, we have $\| x_n(\tau)-x_n(t) \|\le \int_t^\tau\| x_n'(s) \|ds\le \int_t^\tau\gamma(s)ds$ for every $t,\tau\in \Omega$ with $t\le \tau$. Choose any $T>0$ with $t\le T$. Since the finite measure on $[0,T]$ defined by $A\mapsto \int_A\gamma(s)ds$ for $A\subset [0,T]$ is absolutely continuous with respect to the Lebesgue measure, for every $\varepsilon>0$ there exists $\delta>0$ such that $\int_A\gamma(s)ds<\varepsilon$ for every $A\subset [0,T]$ with $|A|<\delta$. This means that $\| x_n(\tau)-x_n(t) \|<\varepsilon$ for each $n\in \N$ whenever $|t-\tau|<\delta$. Therefore, $\{ x_n \}_{n\in \N}$ is equicontinuous with respect to the norm topology of $X$, and hence, so is with respect to the weak topology of $X$. By the Arzela--Ascoli theorem (see \cite[Theorems XII.6.4 and XII.7.2]{du66}), there exist a subsequence of $\{ x_n \}_{n\in \N}$ (which we do not relabel) and a continuous function $x\in C_w(\Omega,X)$ such that $\sup_{t\in [0,T]}\rho(x_n(t),x(t))\to 0$ for every $T>0$. Moreover, for every $T>0$, the sequence $\{ x_n'|_{[0,T]} \}_{n\in \N}$ is bounded, uniformly integrable subset of $L^1([0,T],E)$ such that $x_n'(t)\in G(t)$ a.e.\ $t\in \Omega$ for each $n\in \N$ by Assumption \ref{assmp1}(ii). 

Let $\{ T_n \}_{n\in \N}$ be a monotone increasing sequence of positive real numbers with $T_n\to \infty$. In view of Theorem \ref{diest}, for $T_1$, there exist a subsequence $\{ x_{n_1(k)}' \}_{k\in \N}$ of $\{ x_n' \}_{n\in \N}$ and $u_1\in L^1([0,T_1],E)$ such that $x_{n_1(k)}'|_{[0,T_1]}\to u_1$ weakly in $L^1([0,T_1],E)$. Again by Theorem \ref{diest}, for $T_2$, there exist a subsequence $\{ x_{n_2(k)}' \}_{k\in \N}$ of $\{ x_{n_1(k)}' \}_{k\in \N}$ and $u_2\in L^1([0,T_2],E)$ such that $x_{n_2(k)}'|_{[0,T_2]}\to u_2$ weakly in $L^1([0,T_2],E)$. Note that $u_2(t)=u_1(t)$ for every $t\in [0,T_1]$. Inductively, for each $T_N$ with $N\in \N$ there exist a subsequence $\{ x_{n_N(k)}' \}_{k\in \N}$ of $\{ x_{n_{N-1}(k)}' \}_{k\in \N}$ and $u_N\in L^1([0,T_N],E)$ such that $x_{n_N(k)}'|_{[0,T_N]}\to u_N$ weakly in $L^1([0,T_N],E)$ and $u_N(t)=u_{N-1}(t)$ for every $t\in [0,T_{N-1}]$. For every $t\in \Omega$, let $u(t)=u_N(t)$ whenever $t\le T_N$. By construction, the function $u\in L^1(\Omega,E)$ is well-defined because of $\| u(t) \|\le \gamma(t)$. Let $n(k)=n_k(k)$ for each $k\in \N$. This diagonalization procedure demonstrates that $\{ x_{n(k)} \}_{k\in \N}$ is a subsequence of $\{ x_n \}_{n\in \N}$ such that for every $T>0$: $x_{n(k)}\to x$ uniformly in $C_w([0,T],X)$ and $x_{n(k)}'\to u$ weakly in $L^1([0,T],E)$. We then have
\begin{align*}
\langle x^*,x(t) \rangle=\lim_{k\to \infty}\langle x^*,x_{n(k)}(t)\rangle
& =\lim_{k\to \infty}\int_0^t\langle x^*,x_{n(k)}'(s) \rangle ds+\langle x^*,\xi_0 \rangle \\
& =\int_0^t\langle x^*,u(s) \rangle ds+\langle x^*,\xi_0 \rangle \\
& =\left\langle x^*,\int_0^tu(s)ds+\xi_0 \right\rangle
\end{align*}
for every $x^*\in E^*$ and $t\in [0,T]$, where the second line employs the Lebesgue dominated convergence theorem. This means that $x(t)=\int_0^tu(s)ds+\xi_0$ for every $t\in \Omega$ with $x'=u$. Hence, $x\in W^{1,1}_\mathrm{loc}(\Omega,E)$.  

Define $\varphi_k:\Omega\to \R$ and $\psi_k:\Omega\to \R$ by 
$$
\varphi_k(t)=L(t,x_{n(k)}(t),x_{n(k)}'(t))F\left( t,\int_0^tf(s,x_{n(k)}(s),x_{n(k)}'(s))ds \right)
$$
and 
$$
\psi_k(t)=f(t,x_{n(k)}(t),x_{n(k)}'(t))
$$ 
respectively. Since $x_{n(k)}\in \A$, we have $\varphi_k\in L^1(\Omega)$ by \eqref{eq1} and $\psi_k\in L^1_{\mathrm{loc}}(\Omega)$ by \eqref{eq2}. Since $\{ \varphi_k \}_{k\in \N}$ and $\{ \psi_k \}_{k\in \N}$ are locally integrably bounded sequences in $ L^1_{\mathrm{loc}}(\Omega)$, as in the above argument for the sequence $\{ x_n' \}_{n\in \N}$ in $L^1_{\mathrm{loc}}(\Omega,E)$, there exist $\varphi$ and $\psi$ in $L^1_{\mathrm{loc}}(\Omega)$ such that for some subsequences of $\{ \varphi_k \}_{k\in \N}$ and $\{ \psi_k \}_{k\in \N}$ (which we do not relabel), we have $\varphi_k\to \varphi$ and $\psi_k\to \psi$ weakly in $L^1([0,T])$ for every $T>0$. In particular, $\int_0^t\psi_kds\to \int_0^t\psi ds$ for every $t\in \Omega$. Since $\varphi_k(t)=L(t,x_{n(k)}(t),x_{n(k)}'(t))F(t,\int_0^t\psi_kds)$, for every $\delta>0$ we have
$$
(\varphi_k(t),\psi_k(t),x_{n(k)}'(t))\in \tilde{\Gamma}\left(t,B^\delta_\mathit{w}\left( \int_0^t\psi(s)ds,x(t) \right) \right) \quad \text{a.e.\ $t\in \Omega$}
$$
for sufficiently large $k$. By Lemma \ref{lem1}, for $T_1$ there exist $\varphi^1_k\in L^1_{\mathrm{loc}}(\Omega)$, $\psi^1_k\in L^1_{\mathrm{loc}}(\Omega)$, and $u^1_k\in L^1_{\mathrm{loc}}(\Omega,E)$ for every $k\in \N$ such that $(\varphi^1_k,\psi^1_k,u^1_k)$ is a convex combination of $\{ (\varphi_j,\psi_j,x_{n(j)}')\mid j\ge k \}$ with $\varphi^1_{k_1(j)}(t)\to \varphi(t)$ and $\psi^1_{k_1(j)}(t)\to \psi(t)$ a.e.\ $t\in [0,T_1]$, and $u^1_{k_1(j)}(t)\to x'(t)$ strongly in $E$ a.e.\ $t\in [0,T_1]$ for some subsequence $\{ (\varphi^1_{k_1(j)},\psi^1_{k_1(j)},u^1_{k_1(j)}) \}_{j\in \N}$ of $\{ (\varphi^1_k,\psi^1_k,u^1_k) \}_{k\in \N}$. For $T_2$, there exist $\varphi^2_k\in L^1_{\mathrm{loc}}(\Omega)$, $\psi^2_k\in L^1_{\mathrm{loc}}(\Omega)$, and $u^2_k\in L^1_{\mathrm{loc}}(\Omega,E)$ for every $k\in \N$ such that $(\varphi^2_k,\psi^2_k,u^2_k)$ is a convex combination $\{ (\varphi_j,\psi_j,x_{n(j)}')\mid j\ge k \}$ with $\varphi^2_{k_2(j)}(t)\to \varphi(t)$ and $\psi^2_{k_2(j)}(t)\to \psi(t)$ a.e.\ $t\in [0,T_2]$, and $u^2_{k_2(j)}(t)\to x'(t)$ strongly in $E$ a.e.\ $t\in [0,T_2]$ for some subsequence $\{ (\varphi^2_{k_2(j)},\psi^2_{k_2(j)},u^2_{k_2(j)}) \}_{j\in \N}$ of $\{ (\varphi^2_{k_1(j)},\psi^2_{k_1(j)},u^2_{k_1(j)}) \}_{j\in \N}$. Inductively, for each $T_N$ with $N\in \N$ there exists $\varphi^N_k\in L^1_{\mathrm{loc}}(\Omega)$, $\psi^N_k\in L^1_{\mathrm{loc}}(\Omega)$, and $u^N_k\in L^1_{\mathrm{loc}}(\Omega,E)$ for every $k\in \N$ such that $(\varphi^N_k,\psi^N_k,u^N_k)$ is a convex combination of $\{ (\varphi_j,\psi_j,x_{n(j)}')\mid j\ge k \}$ with $\varphi^N_{k_N(j)}(t)\to \varphi(t)$ and $\psi^N_{k_N(j)}(t)\to \psi(t)$ a.e.\ $t\in [0,T_N]$, and $u^N_{k_N(j)}(t)\to x'(t)$ strongly in $E$ a.e.\ $t\in [0,T_N]$ for some subsequence $\{ (\varphi^N_{k_N(j)},\psi^N_{k_N(j)},u^N_{k_N(j)}) \}_{j\in \N}$ of $\{ (\varphi^N_{k_{N-1}(j)},\psi^N_{k_{N-1}(j)},u^N_{k_{N-1}(j)}) \}_{j\in \N}$. Let $(\hat{\varphi}_i,\hat{\psi}_i,\hat{u}_i)=(\varphi^i_{k_i(i)},\psi^i_{k_i(i)},u^i_{k_i(i)})$ for each $i\in \N$. Then $(\hat{\varphi}_i,\hat{\psi}_i,\hat{u}_i)$ is a convex combination of $\{ (\varphi_j,\psi_j,x_{n(j)}')\mid j\ge i \}$ such that $\hat{\varphi}_i(t)\to \varphi(t)$ and $\hat{\psi}_i(t)\to \psi(t)$ a.e.\ $t\in \Omega$, and $\hat{u}_i(t)\to x'(t)$ strongly in $E$ a.e.\ $t\in \Omega$. Hence, for every $\delta>0$ we have
$$
(\hat{\varphi}_i(t),\hat{\psi}_i(t),\hat{u}_i(t))\in \overline{\mathrm{co}}\,\tilde{\Gamma}\left(t,B^\delta_\mathit{w}\left( \int_0^t\psi (s)ds,x(t) \right) \right) \quad \text{a.e.\ $t\in \Omega$}
$$
for sufficiently large $i$. It follows from the Cesari property that 
\begin{align*}
(\varphi(t),\psi(t),x'(t))
& \in \bigcap_{\delta>0}\overline{\mathrm{co}}\,\tilde{\Gamma}\left(t,B^\delta_\mathit{w}\left( \int_0^t\psi(s)ds,x(t) \right) \right) \\
& =\tilde{\Gamma}\left(t,\int_0^t\psi(s)ds,x(t) \right) \quad\text{a.e.\ $t\in \Omega$}.
\end{align*}
Thus, $x'(t)\in \Gamma(t,x(t))$ a.e.\ $t\in \Omega$, and hence, $x\in \A$. Furthermore, $\varphi(t)\ge L(t,x(t),x'(t))F(t,\int_0^t\psi(s)ds)$ and $\psi(t)\ge f(t,x(t),x'(t))$ a.e.\ $t\in \Omega$.   

We claim that $x$ is an optimal arc for \eqref{P}. To this end, denote $\hat{\varphi}_i$ as a convex combination by $\hat{\varphi}_i=\sum_{j\ge i}\lambda^i_j\varphi_j$ with $\sum_{j\ge i}\lambda^i_j=1$ and $\lambda^i_j\ge 0$, where for each $i\in \N$ only finitely many $\lambda^i_j$ are nonzero in the sum. Since $|\hat{\varphi}_i(t)|\le \alpha(t)$ for every $t\in \Omega$ and $i\in \N$ by \eqref{eq1}, the Lebesgue dominated convergence theorem and Assumption \ref{assmp2}(ii) yield 
\begin{align*}
\lim_{i\to \infty}\int_\Omega\hat{\varphi}_i(t)dt
& =\int_\Omega\varphi(t)dt \\
& \ge \int_\Omega L(t,x(t),x'(t))F\left(t,\int_0^t\psi(s)ds \right) \\
& \ge \int_\Omega L(t,x(t),x'(t))F\left(t,\int_0^tf(s,x(s),x'(s))ds \right). 
\end{align*}
On the other hand, since $\{ x_{n(i)} \}_{i\in \N}$ is a minimizing subsequence of $\{ x_n \}_{n\in \N}$, we obtain $\int\varphi_jdt\to \min\eqref{P}$, and hence,  
$$
\lim_{i\to \infty}\int_\Omega\hat{\varphi}_i(t)dt=\lim_{i\to \infty}\sum_{j\ge i}\lambda^i_j\int_\Omega\varphi_j(t)dt=\min\eqref{P}. 
$$
Therefore, $x$ is an optimal arc for \eqref{P}. 
\end{proof}

\section{Differential Inclusions with an Infinite Horizon}
\subsection{Viable Solutions with a Finite Horizon}
To legitimate Assumption \ref{assmp3}, we provide in this section a sufficient condition for the existence of a solution in $W^{1,1}_\mathrm{loc}(\Omega,E)$ to the initial value problem governed by the differential inclusion with an infinite horizon
\begin{equation}
\label{IVP}
x'(t)\in \Gamma(t,x(t)) \text{ a.e.\ $t\in \Omega$ and }x(0)=\xi_0\in X \tag{IVP}
\end{equation}
so that it is consistent with the upper semicontinuity and convexity hypotheses on $\Gamma$ in Assumptions 3.4$'$(i). To this end, we investigate the truncated initial value problem 
\begin{equation}
\label{IVPT}
x'(t)\in \Gamma(t,x(t)) \text{ a.e.\ $t\in [0,T]$ and }x(0)=\xi_0\in X \tag{$\mathrm{IVP}_T$}
\end{equation}
with a finite horizon to construct a solution to \eqref{IVP} from that to \eqref{IVPT} following the similar diagonalization procedure to the proof of Proposition \ref{prop}. For the investigation of \eqref{IVPT}, two aspects which are unmentioned in the previous section must be taken into account explicitly. 

The first aspect is the viability problem: We must select an arc $x$ from $W^{1,1}([0,T],E)$ satisfying the viability constraint $x(t)\in X$ for every $t\in [0,T]$, where $X$ is called a \textit{viability set}. Evidently, the smaller is $X$, the more restrictive is this selection procedure. The tangential condition originated in \cite{na42} that was applied to  ordinary differential equations in finite dimensional systems is adapted to the differential inclusion. Let $X$ be a nonempty closed subset of $E$ and $d_X:E\to \R$ be the distance function given by $d_X(y)=\inf_{\xi\in X}\| \xi-y \|$. The (\textit{Bouligand}) \textit{contingent cone} $T_X(\xi)$ to $X$ at $\xi\in X$ is defined by
$$
T_X(\xi)=\left\{ y\in E\mid \liminf_{\lambda\downarrow 0}\frac{d_X(\xi+\lambda y)}{\lambda}=0 \right\}.
$$
The following are basic properties of the contingent cone; see \cite[Proposition 4.1]{de92}.
\begin{enumerate}[(a)]
\item $T_X(\xi)$ is closed, $0\in T_X(\xi)$, and $rT_X(\xi)\subset T_X(\xi)$ for every $\xi\in X$ and $r\ge 0$; $T_X(\xi)=E$ for every $\xi\in \mathrm{int}\,X$.
\item If $X$ is convex, then $T_X(\xi)$ is convex for every $\xi\in X$ with 
\begin{align*}
T_X(\xi)
& =\left\{ y\in E\mid \lim_{\lambda\downarrow 0}\frac{d_X(\xi+\lambda y)}{\lambda}=0 \right\}=\overline{\{ r(y-\xi)\mid y\in X,\,r\ge 0 \}} \\
& =\left\{ y\in E\mid \langle x^*,y \rangle\le 0 \text{ $\forall x^*\in E^*$ satisfying $\langle x^*,\xi \rangle=\sup_{z\in X}\langle x^*,z\rangle $} \right\}
\end{align*}
where $\overline{\{ \cdots \}}$ denotes the norm closure in $E$. 
\end{enumerate}
For an exhaustive treatment of contingent cones, see \cite[Chapter 4]{af90}. 

The significance of contingent cones to the viability problem in ordinary differential equations arises in the following way: Let $g:X\to E$ be a continuous function and suppose that $x:[0,T)\to E$ is a $C^1$-solution to the autonomous initial value problem $x'(t)=g(x(t))$ with $x(0)=\xi_0$. We then have $x(\tau)=\xi_0+\tau g(\xi_0)+o(\tau)$ and $d_X(x(\tau))=d_X(\xi_0+\tau g(\xi_0))+o(\tau)$ with $\tau^{-1}|o(\tau)|\to 0$ as $\tau\downarrow 0$. Therefore, $g(\xi)\in \{ y\in E\mid \lim_{\lambda\downarrow 0}\lambda^{-1}d_X(\xi+\lambda y)=0 \}\subset T_X(\xi)$ for every $\xi\in X$ is a necessary condition to obtain a $C^1$-solution viable in $X$. In particular, Nagumo's theorem states that if $E$ is an $n$-dimensional Euclidean space, then the above initial value problem has a viable solution in $X$ if and only if $g(y)\in T_X(y)$ for every $y\in X$; see \cite[Theorem, p.\,175]{ac84}. For the nonautonomous differential inclusion under investigation, we impose the condition in the sequel the Nagumo type condition: 
$$
\Gamma(t,y)\cap T_X(y)\ne \emptyset \quad\text{for every $(t,y)\in [0,T]\times X$}.
$$

The second aspect is noncompactness of the set value $\Gamma(t,y)$ with respect to the norm topology: While $\Gamma(t,y)$ is assumed to be bounded and closed for every $(t,y)\in [0,T]\times X$, it is not assumed to be norm compact, which is automatic in the finite-dimensional case. To impose a reasonable bound on the gauge of the noncompactness of $\Gamma$, we introduce the measure of noncompactness for bounded sets in $E$. Let $\B$ be the family of bounded subsets of an infinite-dimensional Banach space $E$. If $B\in \B$ is not relatively compact, then there exists an $\varepsilon>0$ such that $B$ cannot be covered by a finitely many sets of diameter less than $\varepsilon$, where $\mathrm{diam}\,S=\sup_{y_1,y_2\in S}\| y_1-y_2 \|$, $S\subset E$. Thus, it is natural to introduce the following notion: Define the nonadditive set function $\nu:\B\to [0,\infty)$ by
$$
\nu(B)=\inf\{\varepsilon>0\mid \text{$B$ admits a finite cover $\{ S_i \}_{i=1}^n$ with $\mathrm{diam}\,S_i\le \varepsilon$} \}
$$
which is called the (\textit{Kuratowski}) \textit{measure of noncompactness}. Since $\nu\equiv 0$ whenever $E$ is finite dimensional, the measure of noncompactness is sensitive only when $E$ is infinite dimensional. Note that $\nu$ satisfies the following properties; see \cite[Proposition 7.2]{de85}. 
\begin{enumerate}[(a)]
\item $\nu(B)=0$ if and only if $B$ is relatively compact in $E$. 
\item $\nu(rB)=|r|\nu(B)$ for every $r\in \R$ and $B\in \B$; $\nu(B_1+B_2)\le \nu(B_1)+\nu(B_2)$ for every $B_1,B_2\in \B$. 
\item $B_1\subset B_2$ implies $\nu(B_1)\le \nu(B_2)$; $\nu(B_1\cup B_2)=\max\{ \nu(B_1),\nu(B_2) \}$ for every $B_1,B_2\in \B$. 
\item $\nu(\mathrm{co}\,B)=\nu(B)$ and $\nu(\overline{B})=\nu(B)$ for every $B\in \B$, where $\mathrm{co}\,B$ is the convex hull of $B$.
\item $\nu$ is continuous with respect to the Hausdorff distance. 
\item $\nu(B^r(x))=2r$ for every $r>0$ and $x\in E$. 
\end{enumerate}
In the sequel, we impose the bound on the measure of noncompactness for the bounded set $\Gamma(t,y)$ in terms of $\nu$. 

\begin{thm}[\citet{de88}]
\label{deiml}
Let $X$ be a closed subset of a separable Banach space $E$ and $\Gamma:[0,T]\times X\twoheadrightarrow E$ be a closed convex-valued, upper semicontinuous multifunction for the norm topology of $X$ and $E$ with the following property. 
\begin{enumerate}[\rm (i)]
\item There exists $k>0$ such that 
$$
\sup_{z\in \Gamma(t,y)}\| z \|\le k(1+\| y \|) \quad\text{for every $(t,y)\in [0,T]\in X$}.
$$
\item $\Gamma(t,y)\cap T_X(y)\ne \emptyset$ for every $(t,y)\in [0,T]\times X$.
\item There exists $\varphi\in L^1([0,T])$ such that 
$$
\lim_{\tau\downarrow 0}\nu(\Gamma(J_{t,\tau}\times B))\le \varphi(t)\nu(B) \quad\text{ for every $B\in \B$ and $t\in (0,T]$},
$$
where $J_{t,\tau}=[t-\tau,t+\tau]\cap[0,T]$ with $\tau>0$. 
\end{enumerate}
Then there exists an admissible arc for \eqref{IVPT}.
\end{thm}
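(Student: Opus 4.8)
The plan is to produce approximate viable arcs by an Euler polygonal scheme that respects the tangential condition (ii), to extract a uniformly convergent subsequence by controlling the Kuratowski measure of noncompactness along the trajectories, and to identify the limit as a solution using Diestel's theorem together with the convexity and upper semicontinuity of $\Gamma$.

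First I would build, for each $m\in \N$ and a mesh of width $1/m$, a piecewise affine arc $x_m$ with $x_m(0)=\xi_0$ as follows: at a node $t$ I use (ii) to choose $z\in \Gamma(t,x_m(t))\cap T_X(x_m(t))$, and the definition of the contingent cone furnishes a small step $h$ with $d_X(x_m(t)+hz)\le h/m$; projecting the endpoint onto the closed set $X$ and iterating yields $x_m$ together with a step function $\sigma_m$ satisfying $\sigma_m(t)\to t$ uniformly, $x_m'(t)\in \Gamma(\sigma_m(t),x_m(\sigma_m(t)))$, and $d_X(x_m(t))\to 0$ uniformly. The linear growth bound (i) and Gronwall's inequality give a uniform bound $\|x_m(t)\|\le M$ on $[0,T]$, hence $\|x_m'(t)\|\le k(1+M)=:\kappa$, so $\{x_m'\}$ is integrably bounded and $\{x_m\}$ is equi-Lipschitz, thus equicontinuous.

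The crux is to upgrade equicontinuity to relative compactness in $C([0,T],E)$ in spite of the possible noncompactness of the values of $\Gamma$. Setting $v(t)=\nu(\{x_m(t)\mid m\in \N\})$, I would combine the identity $x_m(t)=\xi_0+\int_0^t x_m'(s)\,ds$ with the standard estimate bounding the measure of noncompactness of a family of Bochner integrals by the integral of the pointwise measure of noncompactness, and then invoke hypothesis (iii) with $B=\{x_m(s)\mid m\in \N\}$ to obtain $\nu(\{x_m'(s)\})\le \varphi(s)v(s)$ and hence $v(t)\le \int_0^t \varphi(s)v(s)\,ds$ with $v(0)=0$. Gronwall's inequality forces $v\equiv 0$, so $\{x_m(t)\}$ is relatively compact in $E$ for every $t$, and the same estimate gives $\nu(\{x_m'(t)\})\le \varphi(t)v(t)=0$ a.e. By the Arzela--Ascoli theorem a subsequence, not relabeled, converges uniformly to some $x\in C([0,T],E)$, and the closedness of $X$ together with $d_X(x_m(t))\to 0$ gives $x(t)\in X$ for every $t$.

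It remains to identify $x$ as a solution. The derivatives $\{x_m'\}$ form a bounded, integrably bounded subset of $L^1([0,T],E)$ whose values lie in the relatively weakly compact set $G(t)=\overline{\{x_m'(t)\mid m\in \N\}}$ by the preceding measure-of-noncompactness bound, so Theorem \ref{diest} yields a weak limit $u\in L^1([0,T],E)$ of a further subsequence; passing to the limit in $x_m(t)=\xi_0+\int_0^t x_m'(s)\,ds$ exactly as in the proof of Proposition \ref{prop} gives $x(t)=\xi_0+\int_0^t u(s)\,ds$, so $x\in W^{1,1}([0,T],E)$ with $x'=u$. To conclude that $x'(t)\in \Gamma(t,x(t))$ a.e., I would apply the Mazur construction of Lemma \ref{lem1} to get convex combinations of $\{x_k'\mid k\ge m\}$ converging a.e.\ to $u$; since $\Gamma$ is convex-valued and norm upper semicontinuous, the uniform convergences $x_m\to x$, $x_m(\sigma_m(\cdot))\to x$, and $\sigma_m(\cdot)\to \mathrm{id}$ let the graph closedness of $\Gamma$ absorb these convex combinations and deliver $u(t)\in \Gamma(t,x(t))$ a.e. I expect the main obstacle to be the measure-of-noncompactness step, namely the rigorous passage from the pointwise bound (iii) to the integral inequality for $v$, which hinges on controlling the estimate for $\nu$ of the Bochner integrals and on the uniform control of the sampling functions $\sigma_m$.
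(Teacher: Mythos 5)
There is no proof in the paper to compare yours against: Theorem \ref{deiml} is stated as a quotation of \citet{de88} and is used purely as a black box (it supplies the solutions of \eqref{IVPT} from which the admissible arc for \eqref{IVP} is assembled by diagonalization). Judged on its own terms, your outline reconstructs what is essentially Deimling's own argument: Euler polygons generated by the tangential condition, compactness of the approximations via the Kuratowski measure of noncompactness and Gronwall's inequality, Arzel\`a--Ascoli, and identification of the limit through weak $L^1$ compactness (Theorem \ref{diest}), Mazur-type convex combinations (Lemma \ref{lem1}), and the convexity and norm upper semicontinuity of $\Gamma$. The step you flag as the main obstacle is real but closable: fix $s$ and $\tau_0>0$, and let $B_{\tau_0}$ be the $\kappa\tau_0$-enlargement of $\{x_m(s)\mid m\in\N\}$; since $\sigma_m(s)\to s$ and $\|x_m(\sigma_m(s))-x_m(s)\|\le \kappa|\sigma_m(s)-s|$, for every $\tau\le\tau_0$ all but finitely many of the $x_m'(s)$ lie in $\Gamma(J_{s,\tau}\times B_{\tau_0})$, and finitely many points do not affect $\nu$; applying (iii) to the \emph{fixed} set $B_{\tau_0}$, letting $\tau\downarrow 0$, and then using $\nu(B_{\tau_0})\le v(s)+2\kappa\tau_0$ with $\tau_0\downarrow 0$ gives $\nu(\{x_m'(s)\})\le\varphi(s)v(s)$. (The possible factor $2$ in the estimate of $\nu$ of a family of Bochner integrals is also harmless: Gronwall forces $v\equiv 0$ regardless of the constant.)

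Two points in the construction itself are genuine gaps. First, ``projecting the endpoint onto the closed set $X$'' is not an available operation: in an infinite-dimensional Banach space a closed set need not contain a nearest point to a given point (closed sets are not proximinal in general, and this paper's setting is deliberately nonreflexive). The repair is to use the tangency estimate itself: $d_X(x_m(t)+hz)\le h/m$ yields some $\eta\in X$ with $\|x_m(t)+hz-\eta\|\le 2h/m$, and this $\eta$ serves as the next node; the cost is that the polygonal derivative lies only within $2/m$ of $\Gamma(\sigma_m(t),x_m(\sigma_m(t)))$, an error your final upper semicontinuity/convexity step must absorb (it can, because $\Gamma(t,x(t))+\varepsilon \bar{B}$ is convex and $\bigcap_{\varepsilon>0}\left(\Gamma(t,x(t))+\varepsilon\bar{B}\right)=\Gamma(t,x(t))$ for closed values). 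Second, and more substantively, the definition of $T_X$ produces admissible step lengths only along some sequence $\lambda_n\downarrow 0$ depending on the current node, with no uniform lower bound; ``iterating'' therefore does not by itself yield an arc defined on all of $[0,T]$, since the node times could a priori accumulate at some $t^*<T$. This is the classical crux of Nagumo-type constructions and requires an explicit maximality argument: order the approximate solutions by extension and apply Zorn's lemma, noting that the union over a chain extends to the closure of its domain (the nodes are Cauchy by the Lipschitz bound and $X$ is closed), and that a maximal approximate solution defined only on $[0,a]$ with $a<T$ could be prolonged by one more step using the tangency condition at $(a,x(a))$ --- a contradiction. With these two repairs, your proposal becomes a complete proof along the lines of \citet{de88}.
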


For the existence of solutions to the initial value problem in $W^{1,p}([0,T],E)$ for $1<p<\infty$ without any viability constraint under the reflexivity of $E$, see \cite{ma01}.

\subsection{Existence of Admissible Arcs}
Based on Theorem \ref{deiml}, we proceed to prove the existence of admissible arcs for \eqref{IVP} with several additional conditions consistent with those in Section 3. In particular, we strengthen the convexity and the upper semicontinuity of $\Gamma$. 

\begin{assmp}
\label{assmp4}
\begin{enumerate}[\rm (i)]
\item $X\subset E$ is weakly compact and convex. 
\item $\mathrm{gph}\,\Gamma(t,\cdot)=\{ (y,z)\in X\times E\mid z\in \Gamma(t,y) \}$ is convex for every $t\in \Omega$.
\item There exists a relatively weakly compact-valued multifunction $G:\Omega\twoheadrightarrow E$ such that $\Gamma(t,y)\subset G(t)$ for every $(t,y)\in \Omega\times X$. 
\item $\Gamma:\Omega\times X\twoheadrightarrow E$ is an upper semicontinuous multifunction for the weak topology of $X$ and the norm topology of $E$ with norm closed values. 
\item For every $T>0$ there exists $k>0$ such that 
$$
\sup_{z\in \Gamma(t,y)}\| z \|\le k(1+\| y \|) \quad\text{for every $(t,y)\in [0,T]\in X$}.
$$
\item $\Gamma(t,y)\cap T_X(y)\ne \emptyset$ for every $(t,y)\in \Omega\times X$.
\item There exists $\varphi\in L^1_{\mathrm{loc}}(\Omega)$ such that 
$$
\lim_{\tau\downarrow 0}\nu(\Gamma(J_{t,\tau}\times B))\le \varphi(t)\nu(B) \quad\text{ for every $B\in \B$ and $t\in \Omega$}.
$$
\end{enumerate}
\end{assmp}

The second main result of the paper is as follows, whose proof is more or less parallel to that of Proposition \ref{prop}. 

\begin{prop}
Let $E$ be a separable Banach space. Under Assumption \ref{assmp4}, there exists an admissible arc for \eqref{IVP}.
\end{prop}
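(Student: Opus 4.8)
The plan is to solve the truncated problem $(\mathrm{IVP}_T)$ on an exhausting sequence of finite intervals by invoking Theorem \ref{deiml}, and then to paste the finite-horizon solutions together by the equicontinuity/weak-compactness/diagonalization scheme already used for Proposition \ref{prop}. First I would fix a monotone sequence $T_N\uparrow\infty$ and check that the hypotheses of Theorem \ref{deiml} hold on each $[0,T_N]$. Assumption \ref{assmp4} is tailored to supply exactly what is needed: $X$ is norm closed (being weakly compact), Assumption \ref{assmp4}(ii) forces every slice $\Gamma(t,y)$ to be convex, and since the weak topology on $X$ is coarser than the norm topology, every weakly open set is norm open, so the weak–norm upper semicontinuity in \ref{assmp4}(iv) implies the norm–norm upper semicontinuity demanded by Theorem \ref{deiml}. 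The linear growth \ref{assmp4}(v), the Nagumo-type condition \ref{assmp4}(vi), and the measure-of-noncompactness bound \ref{assmp4}(vii) are precisely conditions (i)–(iii) of Theorem \ref{deiml}. Thus for each $N$ there is a viable arc $x_N\in W^{1,1}([0,T_N],E)$ with $x_N(0)=\xi_0$, $x_N(t)\in X$ on $[0,T_N]$, and $x_N'(t)\in\Gamma(t,x_N(t))\subset G(t)$ a.e.

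Next, since $X$ is weakly compact it is norm bounded, say $\|y\|\le M$ on $X$, so \ref{assmp4}(v) yields on each fixed $[0,T_N]$ a constant bound $\|x_M'(t)\|\le k_N(1+M)$ for all $M\ge N$ and a.e.\ $t\in[0,T_N]$. This makes $\{x_M'\}_{M\ge N}$ bounded and uniformly integrable in $L^1([0,T_N],E)$ with values in the relatively weakly compact $G(t)$, so Theorem \ref{diest} delivers weak $L^1$-limit points of the derivatives; simultaneously the uniform Lipschitz bound makes $\{x_M\}$ equicontinuous (in norm, hence weakly) with values in the weakly compact $X$, so the Arzel\`a--Ascoli theorem applies in $C_w([0,T_N],X)$. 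Running the nested-subsequence diagonalization over $N$ exactly as in the proof of Proposition \ref{prop}, I obtain a subsequence $\{x_{n(k)}\}$, a limit $x\in C_w(\Omega,X)$, and $u\in L^1_{\mathrm{loc}}(\Omega,E)$ such that $x_{n(k)}\to x$ uniformly on compacta for the weak topology and $x_{n(k)}'\to u$ weakly in $L^1([0,T],E)$ for every $T$. Testing against $x^*\in E^*$ and passing to the limit under the Lebesgue dominated convergence theorem gives $x(t)=\int_0^t u(s)\,ds+\xi_0$, so $x\in W^{1,1}_{\mathrm{loc}}(\Omega,E)$ with $x'=u$; moreover $x(0)=\xi_0$ and, since $X$ is weakly closed, $x(t)\in X$ for all $t$.

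The crux is to verify $x'(t)\in\Gamma(t,x(t))$ a.e., which here replaces the Cesari-property argument of Proposition \ref{prop}. I would apply Lemma \ref{lem1} together with the same diagonalization to extract convex combinations $\hat u_i$ of $\{x_{n(j)}':j\ge i\}$ with $\hat u_i(t)\to x'(t)$ strongly in $E$ a.e.\ $t\in\Omega$. Fix such a $t$ and $\varepsilon>0$. By weak–norm upper semicontinuity of $\Gamma(t,\cdot)$ at $x(t)$ there is a weak neighborhood $B^\delta_{w}(x(t))$ with $\Gamma(t,y)\subset\Gamma(t,x(t))+B^\varepsilon(0)$ for all $y\in B^\delta_{w}(x(t))\cap X$; since $x_{n(j)}(t)\to x(t)$ weakly, the inclusion $x_{n(j)}'(t)\in\Gamma(t,x_{n(j)}(t))$ places $x_{n(j)}'(t)$ in the convex set $\Gamma(t,x(t))+B^\varepsilon(0)$ for all large $j$, hence so do their convex combinations $\hat u_i(t)$. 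Letting $i\to\infty$ gives $d(x'(t),\Gamma(t,x(t)))\le\varepsilon$, and letting $\varepsilon\downarrow 0$ with the closedness of $\Gamma(t,x(t))$ yields $x'(t)\in\Gamma(t,x(t))$. The main obstacle is precisely this closure step: I must marry the merely weak convergence of the states $x_{n(j)}(t)$ with the weak $L^1$-convergence of the derivatives, and it is the convexity of the values (from \ref{assmp4}(ii)) together with the weak–norm upper semicontinuity (from \ref{assmp4}(iv)) that make the Mazur-combination argument close; verifying at the outset that \ref{assmp4} genuinely supplies the norm–norm upper semicontinuity required by Theorem \ref{deiml} is the other point needing care.
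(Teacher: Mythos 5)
Your proposal is correct and follows the paper's overall architecture: truncation via Theorem \ref{deiml} on an exhausting sequence $[0,T_N]$, the Arzela--Ascoli/Diestel (Theorem \ref{diest}) diagonalization to produce $x\in W^{1,1}_{\mathrm{loc}}(\Omega,E)$ with $x_{n(k)}\to x$ in $C_w$ on compacta and $x_{n(k)}'\to x'$ weakly in $L^1([0,T],E)$, followed by Mazur-type convex combinations from Lemma \ref{lem1}. The one place you genuinely depart from the paper is the closure step $x'(t)\in\Gamma(t,x(t))$. The paper convexifies states and derivatives \emph{simultaneously}: with the same weights $\lambda^i_k$ it sets $\hat{x}_i=\sum_{k\ge i}\lambda^i_k x_{n(k)}$, so that $\hat{x}_i'=\hat{u}_i$, invokes the graph convexity of Assumption \ref{assmp4}(ii) to place $\hat{u}_i(t)\in\Gamma(t,\hat{x}_i(t))$, and then passes to the limit ($\hat{x}_i(t)\to x(t)$ weakly, $\hat{u}_i(t)\to x'(t)$ strongly) via the closedness of the graph of the weak--norm upper semicontinuous map $\Gamma(t,\cdot)$ with closed values. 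You instead push the states through upper semicontinuity \emph{first}---$x_{n(j)}'(t)\in\Gamma(t,x_{n(j)}(t))\subset\Gamma(t,x(t))+B^\varepsilon(0)$ for large $j$---and then need only the single value $\Gamma(t,x(t))$ to be convex in order to keep the combinations $\hat{u}_i(t)$ in that set, concluding with $\varepsilon\downarrow 0$ and norm closedness. This buys a genuine, if modest, improvement: your argument uses only convexity of the values (which follows from, but is strictly weaker than, convexity of $\mathrm{gph}\,\Gamma(t,\cdot)$), so it proves the proposition under a formally weaker version of Assumption \ref{assmp4}(ii); the paper needs full graph convexity precisely because it evaluates $\Gamma$ at the convexified state $\hat{x}_i(t)$. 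Your explicit verification that the weak--norm upper semicontinuity of Assumption \ref{assmp4}(iv) implies the norm--norm upper semicontinuity demanded by Theorem \ref{deiml} (every weakly open subset of $X$ is norm open) is also a point the paper leaves implicit, and it does need to be said.
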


\begin{proof}
Since the weakly compact set $X$ is norm bounded, Assumption \ref{assmp4}(v) implies that $\Gamma$ is locally integrably bounded with $\sup_{z\in \Gamma(t,y)}\| z \|\le k(1+\sup_{\xi\in X}\| \xi \|)$ for every $(t,y)\in [0,T]\times X$, where the constant $k$ depends upon $T$. Let $T_n\uparrow \infty$. In view of Theorem \ref{deiml}, for each $n\in \N$ there is a solution $x_n\in W^{1,1}([0,T_n],E)$ to $(\mathrm{IVP}_{T_n})$. Since $\{ x_n|_{[0,T_1]} \}_{n\in \N}$ is an equicontinuous sequence in $C_w([0,T_1],X)$ and $\{ x_n'|_{[0,T_1]} \}_{n\in \N}$ is an integrably bounded sequence in $L^1([0,T_1],E)$ with $x_n'(t)\in G(t)$ for every $t\in [0,T_1]$ and $n\in \N$, by the Arzela--Ascoli theorem and Theorem \ref{diest}, there exist a subsequence $\{ x_{n_1(k)} \}_{k\in \N}$ of $\{ x_n \}_{n\in \N}$, $x_1\in C_w([0,T_1],X)$, and $u_1\in L^1([0,T_1],E)$ such that $x_{n_1(k)}|_{[0,T_1]}\to x_1$ uniformly in $C_w([0,T_1],X)$ and $x_{n_1(k)}'|_{[0,T_1]}\to u_1$ weakly in $L^1([0,T_1],E)$. Since $\{ x_{n_1(k)}|_{[0,T_2]} \}_{k\in \N}$ is an equicontinuous sequence in $C_w([0,T_2],X)$ and $\{ x_{n_2(k)}'|_{[0,T_2]} \}_{k\in \N}$ is an integrably bounded sequence in $L^1([0,T_2],E)$ with $x_n'(t)\in G(t)$ for every $t\in [0,T_2]$ and $n\in \N$, there exist a subsequence $\{ x_{n_2(k)} \}_{k\in \N}$ of $\{ x_{n_1(k)} \}_{k\in \N}$, $x_2\in C_w([0,T_2],X)$, and $u_2\in L^1([0,T_2],E)$ such that $x_{n_2(k)}|_{[0,T_2]}\to x_2$ uniformly in $C_w([0,T_1],X)$ and $x_{n_2(k)}'|_{[0,T_2]}\to u_2$ weakly in $L^1([0,T_2])$. Note that $x_2(t)=x_1(t)$ and $u_2(t)=u_1(t)$ for every $t\in [0,T_1]$. Inductively, for each $T_N$ there exists a subsequence $\{ x_{n_N(k)} \}_{k\in \N}$ of $\{ x_{n_{N-1}(k)} \}_{k\in \N}$, $x_N\in C_w([0,T_N],X)$, and $u_N\in L^1([0,T_N],E)$ such that $x_{n_N(k)}|_{[0,T_N]}\to x_N$ uniformly in $C_w([0,T_N],X)$ and $x_{n_N(k)}'|_{[0,T_N]}\to u_N$ weakly in $L^1([0,T_N],E)$, and $x_N(t)=x_{N-1}(t)$ and $u_N(t)=u_{N-1}(t)$ for every $t\in [0,T_{N-1}]$. For every $t\in \Omega$, let $x(t)=x_N(t)$ and $u(t)=u_N(t)$ whenever $t\le T_N$. By construction, the functions $x\in C_w(\Omega,X)$ and $u\in L^1_{\mathrm{loc}}(\Omega,E)$ are well-defined. Let $n(k)=n_k(k)$ for each $k\in \N$. This diagonalization procedure demonstrates that $\{ x_{n(k)} \}_{k\in \N}$ and $\{ u_{n(k)} \}_{k\in \N}$ are  subsequences of $\{ x_n \}_{n\in \N}$ and $\{ u_n \}_{n\in \N}$ respectively such that for every $T>0$: $x_{n(k)}\to x$ uniformly in $C_w([0,T],X)$ and $u_{n(k)}\to u$ weakly in $L^1([0,T],E)$. As in the same way with the proof of Proposition \ref{prop}, one can show that $x'=u$, and hence, $x\in W^{1,1}_{\mathrm{loc}}(\Omega,E)$. 

By Lemma \ref{lem1}, for every $T>0$ there exists a sequence $\{ \hat{u}_i \}_{i\in \N}$ in $L^1([0,T],E)$ such that $\hat{u}_i$ is a convex combination of $\{ x_{n(k)}'\mid k\ge i \}$ and for some subsequence of $\{ \hat{u}_i \}_{i\in \N}$ (which we do not relabel), we have $\hat{u}_i(t)\to x'(t)$ strongly in $E$ a.e.\ $t\in [0,T]$. Denote $\hat{u}_i$ as a  convex combination by $\hat{u}_i=\sum_{k\ge i}\lambda_k^ix_{n(k)}'$ with $\sum_{k\ge i}\lambda_k^i=1$ and $\lambda_k^i\ge 0$, where for each $i\in \N$ only finitely many $\lambda_k^i$ are nonzero in the sum. Define $\hat{x}_i=\sum_{k\ge i}\lambda_k^ix_{n(k)}$, for which $\hat{x}_i'=\hat{u}_i$. It is evident that $\hat{x}_i(t)\to x(t)$ weakly in $X$ for every $t\in [0,T]$. Since $\hat{u}_i(t)\in \Gamma(t,\hat{x}_i(t))$ a.e.\ $t\in [0,T]$ by the convexity of $\mathrm{gph}\,\Gamma(t,\cdot)$ in Assumption \ref{assmp4}(ii) and $\hat{x}_i(0)=\xi_0$, letting $i\to \infty$ yields at the limit $x'(t)\in \Gamma(t,x(t))$ a.e.\ $t\in [0,T]$ and $x(0)=\xi_0$ by the upper semicontinuity of $\Gamma(t,\cdot)$ in Assumption \ref{assmp4}(iv). Since $T>0$ is arbitrary, we have $x'(t)\in \Gamma(t,x(t))$ a.e.\ $t\in \Omega$. Hence, $x\in \A$. This means that $x$ is a solution to \eqref{IVP}.  
\end{proof}

\end{document}